\newtheorem{theorem}{Theorem}[section]
\newtheorem{open}[theorem]{Problem}
\newtheorem{lemma}[theorem]{Lemma}
\newtheorem{corollary}[theorem]{Corollary}
\theoremstyle{definition}
\newtheorem{remark}[theorem]{Remark}%[section]
\patchcmd{\section}{\scshape}{\large}{}{}
\patchcmd{\subsection}{\bfseries}{\normalfont}{}{}
\patchcmd{\subsubsection}{\itshape}{\normalfont}{}{}
\def\@setauthors{%
	\begingroup
	\def\thanks{\protect\thanks@warning}%
	\trivlist
	\centering\footnotesize \@topsep30\p@\relax
	\advance\@topsep by -\baselineskip
	\item\relax\normalsize 
	\author@andify\authors
	\def\\{\protect\linebreak}%
	%  \MakeUppercase{\authors}%
	\authors%
	\ifx\@empty\contribs
	\else
	,\penalty-3 \space \@setcontribs
	\@closetoccontribs
	\fi
	\endtrivlist
	\endgroup
}
\def\@settitle{\begin{center}%
		\baselineskip14\p@\relax
		\bfseries\large
		%\uppercasenonmath\@title
		\@title
	\end{center}%
}
\definecolor{lime}{HTML}{A6CE39}
\DeclareRobustCommand{\orcidicon}{%
	\begin{tikzpicture}
		\draw[lime, fill=lime] (0,0) 
		circle [radius=0.16] 
		node[white] {{\fontfamily{qag}\selectfont \tiny ID}};
		\draw[white, fill=white] (-0.0625,0.095) 
		circle [radius=0.007];
	\end{tikzpicture}
	\hspace{-2mm}
}
\newcommand{\orcidSon}{\href{https://orcid.org/0000-0002-9560-6392}{\orcidicon}}
\newcommand{\orcidVinh}{\href{https://orcid.org/0009-0005-9917-1436}{\orcidicon}}
\begin{document}
	\title[On the injectivity of evaluation maps]{On the injectivity of  evaluation maps\\ induced by polynomials on certain algebras}
	\author[Frank Kutzschebauch]{Frank Kutzschebauch\textsuperscript{1,$\S$,*}}
	\author[Tran Nam Son]{Tran Nam Son\textsuperscript{2,3,\ddag}}
	\author[Pham Duy Vinh]{Pham Duy Vinh\textsuperscript{2,3,4,\dag}}
	\keywords{Matrix algebra; Banach algebra; Polynomial; Entire function}
	%\dedicatory{This paper is dedicated to Professor ABCD}	
	\subjclass[2020]{12J25; 15A24; 15B33; 16S50; 30C10}	
	%\date{Received: xxxxxx; Revised: yyyyyy; Accepted: zzzzzz.
		%\newline \indent $^{*}$ Corresponding author}
	%\thanks{...}
	
	\maketitle

	\begin{center}
		{\small 
			*Corresponding author\\
			\textsuperscript{1}Mathematical Institute,\\ University of Bern,\\ Bern, Switzerland\\
			\textsuperscript{2}Faculty of Mathematics and Computer Science,\\ University of Science,\\ Ho Chi Minh City, Vietnam \\
			\textsuperscript{3}Vietnam National University,\\ Ho Chi Minh City, Vietnam\\
			\textsuperscript{4}Department of Mathematics,\\ Dong Nai University,\\ 9 Le Quy Don Str., Tan Hiep Ward, Bien Hoa City,\\ Dong Nai Province, Vietnam\\
			\textsuperscript{$\S$}frank.kutzschebauch@unibe.ch,\\ \orcidSon \url{https://orcid.org/0000-0003-2272-8687}\\
			\textsuperscript{\ddag}trannamson1999@gmail.com,\\ \orcidSon \url{https://orcid.org/0000-0002-9560-6392}\\
			\textsuperscript{\dag}phamduyvinh.dlu@gmail.com,\\ \orcidVinh\url{https://orcid.org/0009-0005-9917-1436}
		}
	\end{center}
	
	\begin{abstract}
		We explore the injectivity of the evaluation map \( \mathrm{eva}_{f,\mathcal{A}} : \mathcal{A}^m \to \mathcal{A} \), where \( \mathcal{A} \) is an associative algebra over a field \( F \), and \( f \) is a  polynomial in \( m \geq 1 \) variables with coefficients in $F$.  Our investigation reveals that injectivity is possible only when  \( m = 1 \) and \( f \) has degree one; for functions in two or more variables, such injectivity is impossible.
	\end{abstract}
	
	\section{Introduction}
	
	Let $\mathcal{A}$ be an  associative algebra over a field $F$. If $\mathcal{A}$ is unital, then the identity element of $\mathcal{A}$ is denoted by $\mathbf{1}_{\mathcal{A}}$. 	We write $F[x_1, x_2, \ldots, x_m] $ for the unital associative $F$-algebra of all polynomials in \( m \geq1 \) commuting variables $x_1, x_2, \ldots, x_m$ with coefficients in \( F \). In the case $m=1$, we abbreviate $F[x_1, x_2, \dots, x_m]$ as $F[x]$.
	
	For any polynomial \( f \in F[x_1, x_2, \ldots, x_m] \) and any associative \( F \)-algebra \( \mathcal{A} \), we define the evaluation map \( \mathrm{eva}_{f,\mathcal{A}} : \mathcal{A}^m \to \mathcal{A} \) by substituting \( m \) elements from \( \mathcal{A} \) into the variables of \( f \). Note that we implicitly assume that \( f \) has zero constant term when the algebra \( \mathcal{A} \) is non-unital.  Our aim is to determine the conditions under which $\mathrm{eva}_{f, \mathcal{A}}$  is injective. As we know in the case where $m=1$, the function $f$ itself coincides with the sum of its Taylor series in a neighborhood of the expansion point.	A natural starting point for this investigation is the class of polynomials. 
	
	Injectivity is a recurring phenomenon across many branches of mathematics from classical settings to more recent developments. One of the most striking instances where injectivity forces surjectivity is the celebrated Ax–Grothendieck theorem \cite{Pa_Ax_68,Pa_Grothendieck_66}, which the Fields medalist Terence Tao has described as one of his favorite algebraic results. This theorem connects naturally to the spirit of the Jacobian Conjecture \cite{Bo_Es_00}, a famous open problem that continues to draw intense interest.	 In the context of Banach space theory, a striking recent result by Antonio Avilés and Piotr Koszmider demonstrates that there exists an infinite-dimensional Banach space of the form \( C(K) \), where $K$ is a compact Hausdorff space, such that every injective operator \( T: C(K) \to C(K) \) is automatically surjective; see~\cite{Pa_Av_13}.
	
	In what follows, the paper begins in Section~\ref{section base} with an analysis of the case $\mathcal{A}=F$. Some of the results obtained there serve as a foundation for Section~\ref{nilpotent}, where we address the situation in which $\mathcal{A}$ contains idempotent or nilpotent elements. Since matrix algebras provide a concrete example of algebras with nilpotent elements, it follows that Section~\ref{matrix section} is then devoted to examining this phenomenon in that setting.
	
	Besides, our approach not only addresses the core problem under consideration but also provides deeper insight into some recent developments in the literature. For example, Theorem~\ref{inj poly} in this paper offers a substantial generalization of \cite[Lemma~4.2]{Pa_BuReSa_19} and \cite[Proposition~2.1]{Pa_BuReSa_21}, though certain ideas in our proof are inspired by theirs. In their work, these results are used to show that if the evaluation map \( \mathrm{eva}_{f, \mathrm{M}_n(\mathbb{C})} \) is injective for \( n \in \{2,3\} \), then it must also be surjective. Our findings go further: we show in Theorem~\ref{variable1} that any such injective map must in fact be a polynomial of degree $1$, which immediately implies surjectivity in this context.
	
	\section{On base fields}\label{section base}
	
	We begin with the following remark.
	
	\begin{remark} \label{remark}
		Let $\mathcal{A}$ be an associative algebra over a field $F$ and let $f\in F[x_1,x_2,\ldots,x_m]$.
		\begin{enumerate}[\rm (i)]
			\item If $\mathrm{eva}_{f,\mathcal{A}}$ is injective, then $f$ must be nonconstant. 
			\item If $f\in F[x]$ has the form: $f=ax+b$ where $a,b\in F$ and $a\neq0$, then $\mathrm{eva}_{f,\mathcal{A}}$ is injective.
			\item If $\mathcal{A}$ is unital and $\mathrm{eva}_{f,\mathcal{A}}$ is injective, then $\mathrm{eva}_{f,F}$ is also injective.
		\end{enumerate}
	\end{remark}
	
	From item (iii) of Remark~\ref{remark}, we continue with the case where the algebra is simply the field itself, i.e., $\mathcal{A} = F$. A quick counterexample is  the polynomial $f(x) = x^2$. Over any field $F$ of characteristic different from $2$, we observe that $f(1) = 1 = f(-1)$, even though $1 \ne -1$. Hence, the evaluation map $\mathrm{eva}_{f, F}$ fails to be injective in this case.	When the field $F$ is algebraically closed,  the story sharpens and a stronger statement holds, as follows:
	
	\begin{theorem}\label{al1}
		Let \( F \) be an algebraically closed field, and let $f\in F[x_1,x_2,\ldots,x_m]$. Then, the evaluation map \( \mathrm{eva}_{f, F} \) is injective if and only if $f\in F[x]$ has degree $1$.
	\end{theorem}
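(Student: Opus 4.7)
The backward direction follows immediately from Remark~\ref{remark}(ii). For the forward direction, assume $\mathrm{eva}_{f,F}$ is injective; the plan is first to force $m=1$ and then to force $\deg f=1$.

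\emph{Forcing $m=1$.} Suppose $m\geq 2$. By Remark~\ref{remark}(i), $f$ is nonconstant. If $f$ is independent of some variable $x_i$, varying $x_i$ alone contradicts injectivity, so $f$ involves every variable. Writing $f=\sum_{i=0}^{d}A_i(x_2,\ldots,x_m)x_1^i$ with $A_d\neq 0$ and $d=\deg_{x_1}f\geq 1$, I would choose $(a_2,\ldots,a_m)\in F^{m-1}$ with $A_d(a_2,\ldots,a_m)\neq 0$, which is possible because $F$ is infinite. The specialization $g(x_1):=f(x_1,a_2,\ldots,a_m)\in F[x_1]$ then has degree exactly $d$. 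If $d\geq 2$, the one-variable argument below supplies distinct $b,b'\in F$ with $g(b)=g(b')$, yielding a collision for $\mathrm{eva}_{f,F}$. If $d=1$, write $f=A(x_2,\ldots,x_m)x_1+B(x_2,\ldots,x_m)$ with $A\not\equiv 0$: if $A$ is nonconstant it vanishes at some point of $F^{m-1}$ (using algebraic closure of $F$), collapsing the fiber in $x_1$ to a constant; if $A$ is a nonzero constant, the level set $\{f=0\}$ contains the infinite graph $\{(-A^{-1}B(x_2,\ldots,x_m),x_2,\ldots,x_m):(x_2,\ldots,x_m)\in F^{m-1}\}$. Either sub-case contradicts injectivity.

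\emph{Forcing $\deg f=1$ when $m=1$.} Suppose $d:=\deg f\geq 2$. In characteristic zero, $f'$ is a nonzero polynomial of degree $d-1$, hence has finitely many roots, so $f$ has finitely many critical values. Choosing $c\in F$ outside this finite set, $f-c$ is separable of degree $d\geq 2$ over the algebraically closed field $F$, so it has $d\geq 2$ distinct roots, yielding distinct preimages of $c$ and contradicting injectivity.

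\emph{Principal obstacle.} The only delicate point is the one-variable stage in positive characteristic: if $\operatorname{char}(F)=p>0$ one may have $f'\equiv 0$, and indeed $f(x)=x^p$ is injective since Frobenius is a bijection on an algebraically closed field of characteristic $p$. Strictly as stated the theorem therefore requires $\operatorname{char}(F)=0$; in positive characteristic, the injective one-variable polynomials turn out to be precisely those of the form $\alpha x^{p^k}+\beta$ with $\alpha\neq 0$ and $k\geq 0$, obtained by iterating $f=h(x^p)$ so long as $f'\equiv 0$ and then applying the characteristic-zero argument to the resulting $h$ with $h'\not\equiv 0$.
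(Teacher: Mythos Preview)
Your central observation is correct and important: the theorem as stated is false in positive characteristic. Over an algebraically closed field $F$ of characteristic $p>0$, the polynomial $f(x)=x^{p}$ is injective on $F$ (since $a^{p}=b^{p}$ forces $(a-b)^{p}=0$) yet has degree $p>1$. The paper's own proof shares this gap: in the single-repeated-factor case $f(x)=a(x-b)^{n}$ it asserts one can always find two distinct inputs with the same nonzero value, but when $n=p^{k}$ the only $n$-th root of unity in $F$ is $1$, so no such collision exists and $a(x-b)^{p^{k}}$ is genuinely injective. Your proposed correction---restrict to characteristic zero, and in positive characteristic replace ``degree $1$'' by ``of the form $\alpha x^{p^{k}}+\beta$ with $\alpha\neq 0$''---is the right fix, and your iterate-through-Frobenius argument for that characterization is sound.

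In characteristic zero your proof is correct, though the routes differ from the paper's. For $m=1$ you use a critical-value/separability argument, whereas the paper factors $f$ completely over $F$ and argues from the root pattern; both are short, and yours makes the dependence on characteristic transparent. For $m\geq 2$ the paper's argument is slicker and in fact characteristic-free: fix all variables but one so that the specialization has positive degree, then algebraic closure furnishes a root, and varying the fixed coordinates over the infinite nonvanishing locus of the leading coefficient produces infinitely many zeros of $f$ in $F^{m}$. Your version splits into subcases $d\geq 2$ and $d=1$ and, in the former, defers to the one-variable collision result; this is fine in characteristic zero but would not by itself settle the $m\geq 2$ case in positive characteristic (where the conclusion nevertheless holds, via the paper's root-counting argument).
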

	
	\begin{proof}
		By Remark~\ref{remark}, it suffices to show that if the evaluation map \( \mathrm{eva}_{f, F} \) is injective, then $f$ must have degree $1$ in one variable.  
		
		We begin with the base case where $f$ is a polynomial in one variable. Observe that any polynomial $f$ with degree greater than $1$ can be factored into linear factors (possibly repeated). If it is a single repeated factor, say $f(x) = a(x - b)^n$, where $a,b\in F$, $a \ne 0$ and $n\geq2$, then distinct inputs symmetrically placed around $b$ will map to the same nonzero value, that is, it is easy to find two distinct values of $x$ that map to the same nonzero value under $f$. If it splits into several distinct roots, say $$f(x)=a(x-b_1)^{m_1}(x-b_2)^{m_2}\cdots(x-b_k)^{m_k},$$ where $a \ne 0$, $m_i \ge 1$, $b_i \ne b_j$ for $i \ne j$, and $\displaystyle\sum_{i=1}^k m_i = n$, then  already $f(b_i) = f(b_j) = 0$ for $i \ne j$,  even though $b_i \ne b_j$. In either scenario, we always find two different elements sent to the same result, so injectivity fails unless $f$ has degree $1$.
		
		Lastly, we consider the case in which $f$ is a polynomial in at least two variables.  By Remark~\ref{remark}, if the evaluation map \( \mathrm{eva}_{f,F} \) is injective, then \( f \) must be a nonconstant polynomial. This implies that there exists an index \( i \in \{1, 2, \ldots, m\} \) such that the degree of \( x_i \) in \( f \) is at least 1. We can write \( f \) as a polynomial in \( x_i \), keeping the other variables \( x_1, x_2, \ldots, x_{i-1}, x_{i+1}, \ldots, x_m \) fixed: 
		$$f = \sum_{j=0}^k g_j x_i^j,$$
		where \( g_j \in F[x_1, x_2, \ldots, x_{i-1}, x_{i+1}, \ldots, x_m] \), and \( k \) is the degree of \( f \) with respect to \( x_i \). Since \( \mathrm{eva}_{f,F} \) is injective, the polynomial \( f \) is still nonconstant in \( x_i \). Because \( F \) is algebraically closed, we know that \( f \), as a polynomial in \( x_i \), must have a root in \( F \). Since the other variables \( x_1, x_2, \ldots, x_{i-1}, x_{i+1}, \ldots, x_m \) are fixed, it follows that \( f \) has infinitely many distinct roots in \( F^m \). Therefore, the evaluation map \( \mathrm{eva}_{f,F} \) cannot be injective, which leads to a contradiction, as promised.
	\end{proof}
	
	\begin{theorem}\label{variable}
		Let \( F \) be a field, and let \( m \geq 2 \) be an integer. If \( F \) is either a finite field or the field $\mathbb{R}$ of all real numbers, then for any polynomial \( f \in F[x_1, x_2, \ldots, x_m] \), the evaluation map \( \mathrm{eva}_{f,F} \) cannot be injective.
	\end{theorem}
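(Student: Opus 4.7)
The plan is to split into the two stated cases, since they require different techniques: a counting argument for finite fields, and a topological argument for $\mathbb{R}$.

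\textbf{Finite field case.} This is essentially pigeonhole. If $F$ is finite, then $|F^m| = |F|^m$. Since any field has at least two elements and $m \geq 2$, we have $|F|^m \geq |F|^2 > |F|$, so no function whatsoever from $F^m$ to $F$ can be injective; in particular $\mathrm{eva}_{f,F}$ cannot be injective for any $f$.

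\textbf{Real case.} Here I would use that a polynomial gives a continuous map $f : \mathbb{R}^m \to \mathbb{R}$ and exploit the topological asymmetry between $\mathbb{R}^m$ and $\mathbb{R}$ when $m \geq 2$. First, by Remark~\ref{remark}(i), $f$ must be nonconstant if $\mathrm{eva}_{f,\mathbb{R}}$ is injective, so the image $I = f(\mathbb{R}^m)$ is a nontrivial connected subset of $\mathbb{R}$, i.e., an interval with more than one point. Fix any $p \in \mathbb{R}^m$; then $f$ restricts to a continuous injection $\mathbb{R}^m \setminus \{p\} \to I \setminus \{f(p)\}$. Now comes the key observation: for $m \geq 2$ the space $\mathbb{R}^m \setminus \{p\}$ is connected, so its image under a continuous map is connected, forcing $I \setminus \{f(p)\}$ to be connected. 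This means $f(p)$ must be an endpoint of the interval $I$. Since this holds for every $p \in \mathbb{R}^m$ and an interval has at most two endpoints, $f$ would take at most two values in total, contradicting injectivity on the uncountable set $\mathbb{R}^m$.

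\textbf{Main obstacle.} The finite-field half is essentially automatic; the entire substance lies in the real case. The tempting elementary strategy of fixing all but one variable and appealing to the one-variable picture fails immediately, because polynomials like $x^3$ show that a nonconstant polynomial $\mathbb{R} \to \mathbb{R}$ can be injective, so restriction to lines alone cannot produce a collision. The removing-a-point trick circumvents this by using global connectedness of $\mathbb{R}^m \setminus \{p\}$, which is precisely the feature that distinguishes $m \geq 2$ from $m = 1$. An alternative route would be to argue that for $c$ in the interior of $I$, the real algebraic set $f^{-1}(c)$ is either infinite (using the implicit function theorem at a smooth point) or else reduced to an interior preimage after which the same connectivity argument applies; I would favor the cleaner endpoint argument sketched above, as it avoids any case analysis involving singular level sets.
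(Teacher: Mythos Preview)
Your proof is correct, and both halves match the paper in spirit. The finite-field case is identical (pigeonhole on cardinalities). For $\mathbb{R}$, the paper uses the same remove-a-point connectivity obstruction but executes it differently: it restricts $f$ to the unit sphere $S^{m-1}$, invokes compactness to upgrade the continuous bijection $g:S^{m-1}\to[a,b]$ to a homeomorphism, and then observes that $S^{m-1}\setminus\{x\}$ is connected while $[a,b]\setminus\{y\}$ is not for interior $y$. Your version works directly on $\mathbb{R}^m$ and only needs that continuous images of connected sets are connected together with the fact that $f(\mathbb{R}^m\setminus\{p\})=I\setminus\{f(p)\}$ by injectivity; this avoids the compactness/homeomorphism detour entirely and is arguably cleaner. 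The paper's route buys a concrete homeomorphism (not actually needed), while yours buys a shorter argument with fewer prerequisites. One small point worth making explicit in your write-up: when you say ``$f(p)$ must be an endpoint of $I$,'' you are using that an interval minus a point is connected only when that point is its minimum or maximum, and such points number at most two regardless of whether $I$ is open, half-open, closed, or unbounded---this is what makes the final ``at most two values'' step airtight.
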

	
	\begin{proof}
		First, let us consider the case where \( F \) is a finite field. In this case, \( F^m \) is a finite set. If the evaluation map \( \mathrm{eva}_{f,F} \) were injective, the number of elements in \( F^m \) would have to be less than or equal to the number of elements in \( F \), which implies \( m = 1 \). This leads to a contradiction with  the assumption that $m\geq2$, so there is no polynomial whose evaluation map is injective in this case.
		
		We now turn to the case where \( F = \mathbb{R} \).	 Assume, for contradiction, that the evaluation map $\mathrm{eva}_{f,\mathbb{R}} : \mathbb{R}^m \to \mathbb{R}$ is injective. Restrict it to the unit sphere $S^{m-1} \subseteq \mathbb{R}^m$ and denote
		\[
		g := \mathrm{eva}_{f,\mathbb{R}}|_{S^{m-1}} : S^{m-1} \to \mathbb{R}.
		\]
		Since $f$ is a polynomial, it follows that $g$ is continuous, and since $\mathrm{eva}_{f,\mathbb{R}}$ is injective, so is $g$. The domain $S^{m-1}$ is compact and connected, hence the image 
		$K := g(S^{m-1})$ is also compact and connected in $\mathbb{R}$. Thus $K$ must be either a single point or a closed interval $[a,b]$.
		
		If $K$ is a point, then $g$ is constant, contradicting injectivity (see Remark~\ref{remark}). 
		If $K=[a,b]$, then $g : S^{m-1} \to [a,b]$ is a continuous bijection from a compact space to a Hausdorff space, hence a homeomorphism. 
		Pick $y \in (a,b)$ and let $x = g^{-1}(y)$. Then
		\[
		S^{m-1}\setminus \{x\} \;\cong\; [a,b]\setminus \{y\}.
		\]
		But for $m\geq 2$, the set $S^{m-1}\setminus\{x\}$ is connected 
		(indeed, homeomorphic to $\mathbb{R}^{m-1}$), 
		whereas $[a,b]\setminus\{y\}$ is disconnected. 
		This contradiction shows that no such polynomial can exist.
	\end{proof}

	\begin{remark}\label{remark1}
		Let $F$ be a field and let $f \in F[x]$. 
		\begin{enumerate}[\rm (i)]
			\item Suppose $F = \mathbb{F}_q$ is a finite field with $q$ elements. It is  known (see, for instance, \cite[7.1 Lemma, p.~348]{Bo_Li_97}) that the following conditions are equivalent:
			\begin{enumerate}[\rm (a)]
				\item The evaluation map $\mathrm{eva}_{f,F}$ is injective. 
				\item The evaluation map $\mathrm{eva}_{f,F}$ is surjective.
				\item The polynomial $f$ is a permutation polynomial, i.e. the induced map $\mathrm{eva}_{f,F}$ defines a permutation of $F$. 
				A characterization of permutation polynomials is given by Hermite’s criterion (see \cite[7.4 Theorem]{Bo_Li_97}).
			\end{enumerate}
			
			\item If $F = \mathbb{R}$, then $\mathrm{eva}_{f,F}$ is injective precisely when $f$ is strictly monotone on the whole real line. 
			In particular, this can only happen when the degree of $f$ is odd.
		\end{enumerate}
	\end{remark}

	\section{Algebras containing idempotent or nilpotent elements}\label{nilpotent}

	Theorem~\ref{al1} leads to the following theorem, which addresses the injectivity of the evaluation map on algebras over algebraically closed fields.
	
	\begin{theorem}\label{variable1}
		Let \( \mathcal{A} \) be an associative algebra over a field \( F \), and let \( m \geq 1 \) be an integer. Suppose that \( f \in F[x_1, x_2, \ldots, x_m] \), and if \( \mathcal{A} \) is non-unital, then the polynomial \( f \) has zero constant term and \( \mathcal{A} \) contains a nonzero idempotent. 
		\begin{enumerate}[\rm (i)]
			\item For \(m=1\) and depending on the ground field \(F\), the necessary conditions for \(\mathrm{eva}_{f,\mathcal{A}}\) to be injective can be summarized in the following table,  with the corresponding conditions on the polynomial \(f\).
			
			\begin{longtable}{p{0.45\textwidth} p{0.45\textwidth}}
				\toprule
				\textbf{Field \(F\)} & \textbf{Necessary condition on \(f\)} \\
				\midrule
				\(F\) is algebraically closed. & \(f \in F[x]\) has degree \(1\). \\
				\(F\) is a finite field.          & \(f\in F[x]\) is a permutation polynomial. \\
				\(F\) is the field of real numbers.      & $f\in F[x]$ has odd degree. \\
				\bottomrule
			\end{longtable}
			\item 	If $m\geq2$, and \( F \) is a finite field, the field of real numbers, or an algebraically closed field, then the evaluation map \( \mathrm{eva}_{f,\mathcal{A}} \) cannot be injective.
		\end{enumerate}
	\end{theorem}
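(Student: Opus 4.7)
The plan is to reduce both parts to results already established for the base field \(F\) in Section~\ref{section base}. Once the injectivity of \(\mathrm{eva}_{f,\mathcal{A}}\) is shown to imply the injectivity of \(\mathrm{eva}_{f,F}\), part (i) is read off line by line from Theorem~\ref{al1} and from the two items of Remark~\ref{remark1}, while part (ii) is immediate: Theorem~\ref{al1} rules out \(m\geq 2\) over algebraically closed fields, and Theorem~\ref{variable} rules it out over finite fields and over \(\mathbb{R}\). So the real work is to establish the reduction \(\mathrm{eva}_{f,\mathcal{A}}\) injective \(\Rightarrow\) \(\mathrm{eva}_{f,F}\) injective under the stated hypotheses.

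When \(\mathcal{A}\) is unital, this reduction is nothing but Remark~\ref{remark}(iii) and requires no further argument. The substance of the theorem therefore lies in the non-unital case, where by hypothesis \(f\) has zero constant term and \(\mathcal{A}\) carries a nonzero idempotent \(e\). I would argue contrapositively: assume \(\mathrm{eva}_{f,F}\) is not injective, so there exist \(a=(a_1,\ldots,a_m)\neq b=(b_1,\ldots,b_m)\) in \(F^m\) with \(f(a)=f(b)\). Consider the tuples \(ae:=(a_1 e,\ldots,a_m e)\) and \(be:=(b_1 e,\ldots,b_m e)\) in \(\mathcal{A}^m\). Because \(e\neq 0\) and \(\mathcal{A}\) is an \(F\)-vector space, \(c\mapsto ce\) is an \(F\)-linear injection \(F\hookrightarrow\mathcal{A}\), so \(ae\neq be\). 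The components \(a_i e\) pairwise commute since \((a_i e)(a_j e)=a_i a_j e=(a_j e)(a_i e)\), hence \(f\) (a polynomial in commuting variables) may be evaluated on such a tuple. Using \(e^k=e\) for all \(k\geq 1\) and the fact that every monomial of \(f\) has positive total degree, a direct monomial-wise computation gives the identity
\[
f(a_1 e, a_2 e, \ldots, a_m e) \;=\; f(a_1, a_2, \ldots, a_m)\, e.
\]
Therefore \(f(ae)=f(a)e=f(b)e=f(be)\) with \(ae\neq be\), which contradicts the injectivity of \(\mathrm{eva}_{f,\mathcal{A}}\). This gives injectivity of \(\mathrm{eva}_{f,F}\) in the non-unital case.

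The step I expect to be most delicate is not deep but requires care: one has to distinguish between the formal commuting variables of \(f\) and their evaluations in a possibly noncommutative algebra \(\mathcal{A}\), to observe that on \(Fe\) the ambient noncommutativity disappears because \(e\) is idempotent, and to use the zero–constant-term hypothesis precisely to collapse every \(e^{|\alpha|}\) to \(e\). Once the displayed identity is in place, the combination of the contrapositive reduction with Theorem~\ref{al1}, Theorem~\ref{variable} and Remark~\ref{remark1} yields both items of the theorem with no further argument.
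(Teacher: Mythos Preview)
Your proof is correct and follows essentially the same approach as the paper: reduce injectivity of \(\mathrm{eva}_{f,\mathcal{A}}\) to injectivity of \(\mathrm{eva}_{f,F}\) via scalar multiples of the identity (or of a nonzero idempotent \(e\) in the non-unital case), then invoke Theorems~\ref{al1}, \ref{variable} and Remark~\ref{remark1}. The only cosmetic difference is that the paper packages the non-unital reduction by first passing to the unital corner subalgebra \(e\mathcal{A}e\) and then applying the unital argument there, whereas you carry out the computation \(f(a_1e,\ldots,a_me)=f(a_1,\ldots,a_m)\,e\) directly; the underlying calculation is identical.
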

	
	\begin{proof}
		We begin with the following observations:
		\begin{enumerate}
			\item If  \( \mathcal{A} \) is unital and the evaluation map  \( \mathrm{eva}_{f,\mathcal{A}} \) is injective, then the evaluation map  \( \mathrm{eva}_{f,F} \) is also injective. Indeed, let \( \alpha = (\alpha_1, \ldots, \alpha_m) \) and \( \beta = (\beta_1, \ldots, \beta_m) \) be elements in \( F^m \) such that \( f(\alpha) = f(\beta) \). Evaluate \( f \) at the corresponding scalars  to obtain:
			\[
			f(\alpha_1 \mathbf{1}_\mathcal{A}, \ldots, \alpha_m \mathbf{1}_\mathcal{A}) = f(\alpha) \mathbf{1}_\mathcal{A} = f(\beta) \mathbf{1}_\mathcal{A} = f(\beta_1 \mathbf{1}_\mathcal{A}, \ldots, \beta_m \mathbf{1}_\mathcal{A}).
			\]
			Since we assumed the evaluation map  \( \mathrm{eva}_{f,\mathcal{A}} \) is injective, we must have:
			\[
			(\alpha_1 \mathbf{1}_\mathcal{A}, \ldots, \alpha_m \mathbf{1}_\mathcal{A}) = (\beta_1 \mathbf{1}_\mathcal{A}, \ldots, \beta_m \mathbf{1}_\mathcal{A}),
			\]
			which implies \( \alpha_i = \beta_i \) for all \( i \). Thus, \( \alpha = \beta \), showing that \( \mathrm{eva}_{f,F} \) is injective.
			\item If \( \mathcal{A} \) is non-unital and \( \mathcal{A} \) contains a nonzero idempotent element \( e \), then the corner subalgebra \( e \mathcal{A} e = \{ eae \mid a \in \mathcal{A} \} \) forms a unital subalgebra with identity element \( e \).	If \( \mathrm{eva}_{f,\mathcal{A}} \) is injective, then the restriction \( \mathrm{eva}_{f,e\mathcal{A}e} \) is also injective. Indeed, if \( \alpha, \beta \in (e\mathcal{A}e)^m \) satisfy \( f(\alpha) = f(\beta) \), then injectivity of \( \mathrm{eva}_{f,\mathcal{A}} \) implies \( \alpha = \beta \). This means that the restriction \( \mathrm{eva}_{f,e\mathcal{A}e} \) is also injective. But \( e\mathcal{A}e \) is a unital algebra, and Observation 1 has already shown how injectivity behaves in that setting.
		\end{enumerate} By combining these observations with Theorems~\ref{al1}, \ref{variable} and Remark~\ref{remark1}, the proof is completed.
	\end{proof}

	To that end, we now adopt a structured approach, drawing inspiration from the ideas found in  \cite[Lemma 4.2]{Pa_BuReSa_19}. Our version, however, includes modifications adapted to the specific context of our study, which we outline in detail below.
	
	\begin{theorem}\label{inj poly}
		Let $\mathcal{A}$ be a unital associative algebra over a field $F$ containing a nonzero nilpotent element in $\mathcal{A}$ with nilpotency index \(2\) and let $f \in F[x]$ be a polynomial. If the evaluation map $\operatorname{eva}_{f, \mathcal{A}}$ is injective, then for every $\lambda \in F$, the polynomial $f - \lambda$ has only simple roots in $F$.
	\end{theorem}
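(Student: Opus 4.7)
The plan is to argue by contrapositive: I will show that if some $f-\lambda$ has a repeated root in $F$, then I can exhibit two distinct elements of $\mathcal{A}$ on which $f$ takes the same value, violating injectivity. The hypothesis that $\mathcal{A}$ contains a square-zero nilpotent is exactly what lets me produce such a pair.

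\medskip

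\noindent\textbf{Step 1 (Setup).} Suppose, for contradiction, that there exists $\lambda \in F$ and $\alpha \in F$ such that $\alpha$ is a root of $f-\lambda$ of multiplicity at least $2$. Then in $F[x]$ we may write
\[
f(x) - \lambda = (x-\alpha)^2 \, g(x)
\]
for some $g \in F[x]$.

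\medskip

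\noindent\textbf{Step 2 (Choice of test elements).} Let $n \in \mathcal{A}$ be a nonzero nilpotent with $n^2 = 0$. Define
\[
a := \alpha\, \mathbf{1}_{\mathcal{A}} + n, \qquad b := \alpha\, \mathbf{1}_{\mathcal{A}}.
\]
Since $n \neq 0$, we have $a \neq b$.

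\medskip

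\noindent\textbf{Step 3 (Evaluation).} Because $f$ is a univariate polynomial, the substitution homomorphism $F[x] \to \mathcal{A}$ sending $x \mapsto a$ (respectively $x \mapsto b$) is well defined, so the factorization from Step 1 transfers to $\mathcal{A}$:
\[
f(a) - \lambda \mathbf{1}_{\mathcal{A}} = (a - \alpha \mathbf{1}_{\mathcal{A}})^2\, g(a) = n^2\, g(a) = 0,
\]
and similarly $f(b) - \lambda \mathbf{1}_{\mathcal{A}} = 0$. Hence $f(a) = \lambda \mathbf{1}_{\mathcal{A}} = f(b)$, contradicting the injectivity of $\operatorname{eva}_{f,\mathcal{A}}$.

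\medskip

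There is no real obstacle here; the only subtlety worth flagging is that the decomposition $(x-\alpha)^2 g(x)$ specializes correctly at $a$ and $b$, which is automatic because all factors are polynomials in the single element $a$ (resp.\ $b$) and hence commute, and because $\alpha \mathbf{1}_{\mathcal{A}}$ is central. The use of the $n^2=0$ hypothesis is essential: it is precisely the mechanism that converts a double root of $f-\lambda$ into a collision of $f$ on two distinct points of $\mathcal{A}$.
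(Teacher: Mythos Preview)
Your proof is correct and follows essentially the same approach as the paper's own argument: both exploit a square-zero nilpotent $n$ to produce the pair $\alpha\mathbf{1}_{\mathcal{A}}$ and $\alpha\mathbf{1}_{\mathcal{A}}+n$ on which $f$ collides whenever $\alpha$ is a repeated root of $f-\lambda$. The only cosmetic difference is that the paper writes the multiplicity as $k$ and argues $k=1$, whereas you directly assume multiplicity $\geq 2$ and factor out $(x-\alpha)^2$; the content is identical.
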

	
	\begin{proof}
		Fix an arbitrary \(\lambda \in F\). If \(b \in F\) is a root of \(f(x) - \lambda\) of multiplicity \(k\), we may factor $f(x) - \lambda = (x - b)^k q(x)$,	where $q(x)\in F[x]$ such that \(q(b) \neq 0\). Our goal is to show that \(k = 1\). Since \(\mathcal{A}\) contains a nonzero nilpotent element \(e\) with \(e^2 = 0\), we can define
		$v = b  \mathbf{1}_{\mathcal{A}}$, and  $u = b \mathbf{1}_{\mathcal{A}} + e$. At \(u\), we have \((u - b  \mathbf{1}_{\mathcal{A}})^k = e^k = 0\) if \(k \ge 2\). Thus,
		$f(u) - \lambda  \mathbf{1}_{\mathcal{A}} = (u - b  \mathbf{1}_{\mathcal{A}})^k q(u) = 0.$ At \(v\), since \(v = b  \mathbf{1}_{\mathcal{A}}\), it follows that
		$f(v) - \lambda  \mathbf{1}_{\mathcal{A}} = 0.$ Therefore, both \(u\) and \(v\) are mapped to \(\lambda \mathbf{1}_{\mathcal{A}}\). The assumption that the evaluation map is injective forces \(u = v\), which implies \(e = 0\), contradicting our assumption that \(e\) is a nonzero nilpotent. This contradiction shows that \(k\) must be \(1\), as claimed.
	\end{proof}
	
	As far as we can tell, the collection of unital associative algebras admitting a nonzero nilpotent of index two seems to be rather broad. Matrix algebras offer a concrete instance of this phenomenon. This leads us naturally to the focus of the next section.

	\section{Matrix algebras over fields}\label{matrix section}
	
	As shown in Proposition~\ref{al1}, when \( F \) is an algebraically closed field and \( f \in F[x] \) is nonconstant, a necessary and sufficient condition for the evaluation map \( \mathrm{eva}_{f,F} \) to be injective is that \( f \) has degree one. Since \( F \) can be naturally identified with \( \mathrm{M}_1(F) \), this observation suggests that a similar criterion may hold for matrix algebras of higher dimension. With this in mind, we establish the following theorem as a step toward extending the result to arbitrary fields. We use the standard notation \( \mathrm{M}_n(F) \) to denote the algebra of \( n \times n \) matrices with entries in a field \( F \).

	\begin{theorem}\label{matrix}
		Let $F$ be any field and let $f\in F[x]$ have degree greater than $1$.  Set $c = f(0),$ and	$g(x) = f(x) - c.$
		Then, $\deg g = \deg f > 1$ and $g(0)=0$, so we factor $g(x) = x^m\,h(x),$
		where $m\ge1$ is maximal and $h(x)\in F[x]$ such that $h(0)\neq0$.  For any integer $n\geq2$, define the evaluation map
		$\mathrm{eva}_{f,\mathrm{M}_n(F)}:\;\mathrm{M}_n(F)\;\longrightarrow\;\mathrm{M}_n(F),$ given by
		$\mathrm{eva}_{f,\mathrm{M}_n(F)}(A)=f(A).$
		\begin{enumerate}[{\upshape(i)}]
			\item If $m\ge2$, then $\mathrm{eva}_{f,\mathrm{M}_n(F)}$ fails to be injective.
			\item If $m=1$, write $g(x)=x\,h(x)$, where $h(x)\in F[x]$, and factor
			\[
			h(x)=q_1(x)^{e_1}q_2(x)^{e_2}\cdots q_r(x)^{e_r}
			\]
			into irreducible factors $q_i(x)$'s in $F[x]$, where $r,e_1,e_2,\ldots,e_r$ are positive integers.  Let
			$\displaystyle d = \min_{1\le i\le r}\bigl(\deg q_i\bigr),$
			and choose one factor $q(x)$ of degree $d$.  Then:
			\begin{enumerate}[{\upshape(a)}]
				\item If $n\ge d$, then $\mathrm{eva}_{f,\mathrm{M}_n(F)}$ is again not injective.
				\item If $n<d$, then for every nonzero $A\in \mathrm{M}_n(F)$, one has
				$\mathrm{eva}_{f,\mathrm{M}_n(F)}(A)\neq\mathrm{eva}_{f,\mathrm{M}_n(F)}(0).$
			\end{enumerate}
		\end{enumerate}
	\end{theorem}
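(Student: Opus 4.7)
The plan is to unify all three cases through the equation $f(A) = f(0)\,\mathbf{1}_{\mathrm{M}_n(F)}$, which, via the normalization $g = f - c = x^{m}h$, rewrites as $g(A) = A\cdot h(A) = 0$. Parts (i) and (ii)(a) will be handled by exhibiting an explicit nonzero matrix $A$ with $g(A) = 0$ (so injectivity fails), and part (ii)(b) will be handled by showing no such $A$ can exist, using the minimal polynomial.

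For part (i), I would pick the nonzero nilpotent matrix unit $E = E_{12} \in \mathrm{M}_n(F)$ (available since $n \geq 2$), for which $E^{2} = 0$. Since $m \geq 2$, it follows that $E^{m} = 0$, so $g(E) = E^{m}h(E) = 0$, giving $f(E) = f(0)\,\mathbf{1}_{\mathrm{M}_n(F)}$ while $E \neq 0$. Note that this is essentially the same mechanism already used in Theorem~\ref{inj poly}.

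For part (ii)(a), I would take an irreducible factor $q$ of $h$ of minimal degree $d$ and set $A = \operatorname{diag}(C_q, 0_{n-d}) \in \mathrm{M}_n(F)$, where $C_q$ denotes the companion matrix of $q$. Because $h(0) \neq 0$ forces $q(0) \neq 0$, the polynomials $x$ and $q$ are coprime, and a direct block computation gives $A \cdot q(A) = 0$ (the $C_q$-block is killed by $q(A)$, and the zero block is killed by $A$). Writing $h = q\tilde h$ and using that polynomials in $A$ commute, one obtains $g(A) = A h(A) = A q(A)\tilde h(A) = 0$ while $A \neq 0$. The case $n = d$ is even simpler, as the padding is absent and $A = C_q$ already has minimal polynomial $q$.

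Part (ii)(b) is the real content. I would attack it via the minimal polynomial $p$ of a hypothetical nonzero $A$ satisfying $g(A) = 0$. Then $p$ divides $xh = x\cdot q_1^{e_1}\cdots q_r^{e_r}$, and since $h(0) \neq 0$ this is the factorization of $xh$ into \emph{distinct} irreducibles in $F[x]$. The hypothesis $A \neq 0$ rules out $p = x$ (and of course $p \neq 1$), so $p$ must contain some $q_i$ as a factor; hence $\deg p \geq \deg q_i \geq d$. This contradicts $\deg p \leq n < d$. The one subtle step is the observation that $h(0) \neq 0$ makes $x$ coprime to every $q_i$, which is what prevents the minimal polynomial from being a power of $x$ alone and therefore forces it to pick up degree at least $d$ as soon as $A$ is nonzero.
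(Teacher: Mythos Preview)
Your proposal is correct and matches the paper's argument almost exactly in parts (i) and (ii)(a): the paper also uses a nilpotent $N$ with $N^2=0$ for (i), and the block matrix $C'=\operatorname{diag}(C_q,0_{n-d})$ built from the companion matrix of $q$ for (ii)(a), computing $g(C')=0$ just as you do.

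For (ii)(b) both arguments hinge on the minimal polynomial $p=m_A$ and the bound $\deg p\le n<d$, but the conclusions are reached slightly differently. The paper asserts $\gcd(m_A,g)=1$ and then invokes B\'ezout's identity to obtain $U(A)m_A(A)+V(A)g(A)=\mathrm{I}_n$, a contradiction since both summands vanish. You instead use $p\mid g=x\,q_1^{e_1}\cdots q_r^{e_r}$ directly: since $h(0)\neq 0$ keeps $x$ coprime to each $q_i$, the only divisors of $g$ avoiding every $q_i$ are $1$ and $x$, both excluded by $A\neq 0$, so some $q_i\mid p$ and $\deg p\ge d$. Your route is a bit more transparent about the role of the linear factor $x$ (the paper's line ``no irreducible factor of $g$ divides $m_A$'' glosses over the fact that $x$ itself has degree $1$), but the two arguments are essentially the same idea expressed two ways.
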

	
	\begin{proof}
		(i)\quad If $m\ge2$, pick any nonzero nilpotent matrix $N\in \mathrm{M}_n(F)$ with $N^2=0$ (for instance, a single $2\times2$ Jordan block in the upper-left).  Then, $g(N)=N^m\,h(N)=0,$
		so $f(N)=g(N)+c\mathrm{I}_n=c\mathrm{I}_n=f(0)\mathrm{I}_n$, where $\mathrm{I}_n=\mathbf{1}_{\mathrm{M}_n(F)}$.
		Thus $\mathrm{eva}_{f,\mathrm{M}_n(F)}(N)=\mathrm{eva}_{f,\mathrm{M}_n(F)}(0)$ despite $N\neq0$, proving non-injectivity.
		
		(ii)\quad Now $g(x)=x\,h(x)$ with $h(0)\neq0$.  Factor $h(x)$ as above and recall that $d$ is the smallest degree of its irreducible factors.
		
		\begin{enumerate}[{\upshape(a)}]
			\item If $n\ge d$, let $C\in \mathrm{M}_d(F)$ be the companion matrix of $q(x)$, so $q(C)=0$ but $C\neq0$.  We embed it into $\mathrm{M}_n(F)$ by
			$C'=\begin{pmatrix}C&0\\0&0_{\,n-d}\end{pmatrix}$, and write $h(x)=q(x)\,k(x)$, where $k(x)\in F[x]$, gives
			$g(C')=C'\,h(C')=\begin{pmatrix}C\,q(C)\,k(C)&0\\0&0\end{pmatrix}=0,$			hence $f(C')=c\mathrm{I}_n$, so $\mathrm{eva}_{f,\mathrm{M}_n(F)}(C')=\mathrm{eva}_{f,\mathrm{M}_n(F)}(0)$ with $C'\neq0$, proving also non-injectivity.
			
			\item If $n<d$, suppose for contradiction there is $A\neq0$ with $\mathrm{eva}_{f,\mathrm{M}_n(F)}(A)=\mathrm{eva}_{f,\mathrm{M}_n(F)}(0)$, i.e.\ $f(A)=c\mathrm{I}_n$, so $g(A)=0$.  Let $m_A(x)\in F[x]$ be the minimal polynomial of $A$.  Then $\deg m_A\le n<d$, so no irreducible factor of $g$ divides $m_A$, and thus $\gcd(m_A,g)=1$.  By Bézout’s identity, there exist $U,V\in F[x]$ with
			$U(x)\,m_A(x)+V(x)\,g(x)=1.$
			Substituting $x=A$ yields
			$U(A)\,m_A(A)+V(A)\,g(A)=\mathrm{I}_n,$
			but $m_A(A)=0$ and $g(A)=0$, which is a contradiction.  Hence no nonzero $A$ can satisfy $f(A)=c\mathrm{I}_n$.
		\end{enumerate}
		This completes the proof in both scenarios, giving a precise criterion for when the evaluation map can or cannot be injective.
	\end{proof}
	
	We pose the following open problem, which arises from Theorem~\ref{matrix}.
	
	\begin{open}\label{open1}
		Does injectivity hold for $\mathrm{eva}_{f,\mathrm{M}_n(F)}$ in the case $m=1$ and $n<d$ in Theorem~\ref{matrix}?
	\end{open}

	\begin{remark}\label{remark2}
		To illustrate Problem~\ref{open1}, consider the polynomial $f(x)=x^4+2x+c \in \mathbb{Q}[x]$, where $\mathbb{Q}$ is the field of rational numbers. Let $$A=\begin{pmatrix}0&\tfrac{1}{2}\\[2pt] 1&-1\end{pmatrix}\oplus 0_{n-2},\text{ and }B=\begin{pmatrix}0&-\tfrac{3}{2}\\[2pt] 1&1\end{pmatrix}\oplus 0_{n-2},$$
		where \(\oplus\) denotes the block direct sum. A straightforward computation 
		shows that $f(A)=\frac{3+4c}{4}\mathrm{I}_2 \oplus c\mathrm{I}_{n-2}
		= f(B),$
		while \(A\neq B\). Hence, the evaluation map $\mathrm{eva}_{f,\mathrm{M}_n(\mathbb{Q})}$ fails to be injective.  	This example is worth emphasizing. Indeed, writing $f(x)-f(0)=x^4+2x = x(x^3+2),$
		we see that the decomposition involves \(m=1\) and \(d=3\), with 
		\(h(x)=x^3+2\) irreducible in \(\mathbb{Q}[x]\). Thus, the non-injectivity 
		already appears in the case \(n=2<d\), exactly the range not covered by 
		Theorem~\ref{matrix}.
		
		By the way, the evaluation map $\mathrm{eva}_{f,\mathbb{Q}}$ also fails to be injective.  Indeed, suppose that there exist distinct rationals $x,y\in\mathbb{Q}$ with $f(x)=f(y)$. Then 
		\[
		f(x)-f(y)=(x-y)\big((x+y)(x^2+y^2)+2\big)=0,
		\]
		so necessarily
		\begin{equation}
			(x+y)(x^2+y^2)=-2. \label{a*}
		\end{equation}	Write $x=\frac{a}{u}$ and $y=\frac{b}{u}$ with integers $a,b,u$ such that the greatest common divisor of $a,b,u$ is $1$. 
		Clearing denominators in \eqref{a*} gives
		\begin{equation}
			(a+b)(a^2+b^2)=-2u^3. \label{a**}
		\end{equation}	A parity check shows that $a$ and $b$ cannot be of opposite parity, 
		for otherwise $(a+b)(a^2+b^2)$ would be odd, contradicting the right-hand side. 
		Thus both $a,b$ are either even or odd. Since the greatest common divisor of $a,b,u$ is $1$, it follows that $a$ and $b$ must both be odd.	 In this case, $a+b \equiv 2 \pmod{4},$ and $a^2+b^2 \equiv 2 \pmod{4},$
		so the $2$-adic valuation $v_2(a+b)=v_2(a^2+b^2)=1$. Hence $v_2\!\big((a+b)(a^2+b^2)\big)=2.$	On the other hand, from \eqref{a**}, it is shown that
		$v_2(-2u^3)=1+3v_2(u),$
		which can never equal $2$. This contradiction shows that no such pair $(x,y)$ exists, as claimed.
	\end{remark}
	
	From Remark~\ref{remark2}, we can also formulate another open problem:
	
	\begin{open}
		Is there an irreducible polynomial $f$ over $\mathbb{Q}$ of degree greater than $2$ such that the evaluation map $\mathrm{eva}_{f,\mathrm{M}_2(\mathbb{Q})}$ is injective?
	\end{open}

	However, following Theorem~\ref{matrix} again,  when \( d \in \{1,2\} \), this issue does not arise, since the condition \( n \geq d \) is always satisfied for \( n \geq 2 \). A fundamental example is any algebraically closed field, where every nonconstant polynomial splits into linear factors, hence all irreducible polynomials are of degree one. Using Theorem~\ref{matrix}, we arrive at the following conclusion regarding matrix algebras over algebraically closed fields: if \( F \) is an algebraically closed field and \( f \in F[x] \) is a polynomial of degree greater than one, then for every integer \( n \geq 1 \), the evaluation map \( \mathrm{eva}_{f,\mathrm{M}_n(F)} \) is not injective. Interestingly, this statement coincides exactly with a special case of Theorem~\ref{variable1}, which can be derived directly from Proposition~\ref{al1} without invoking Theorem~\ref{matrix}.

	Another classical example where all irreducible polynomials have degree at most two is the field of real numbers \( \mathbb{R} \). More broadly, it is known that real closed fields are precisely those in which every irreducible polynomial over the field has degree either one or two. Therefore, for any real closed field, Theorem~\ref{matrix} applies fully, and we may deduce the following corollary.
	
	\begin{corollary}\label{real closed}
		Let \( F \) be a real closed field, and let \( f \in F[x] \) be a polynomial of degree greater than $1$. Then, for every integer \( n \geq 2 \), the evaluation map \( \mathrm{eva}_{f,\mathrm{M}_n(F)} \) fails to be injective.
	\end{corollary}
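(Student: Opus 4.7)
The plan is to deduce the corollary directly from Theorem~\ref{matrix} by using the defining property of real closed fields, namely that every irreducible polynomial over $F$ has degree one or two. Fix $f\in F[x]$ with $\deg f>1$, set $c=f(0)$ and $g(x)=f(x)-c$, and write $g(x)=x^m h(x)$ with $m\geq 1$ maximal and $h(0)\neq 0$, as in the statement of Theorem~\ref{matrix}. Since $\deg g=\deg f>1$, we have $m\geq 1$ automatically, and the analysis splits into the two cases $m\geq 2$ and $m=1$.

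In the first case, $m\geq 2$, Theorem~\ref{matrix}(i) applies verbatim for every $n\geq 2$ and produces a nonzero nilpotent $N\in\mathrm{M}_n(F)$ with $f(N)=f(0)\mathrm{I}_n$, so $\mathrm{eva}_{f,\mathrm{M}_n(F)}$ is not injective. No special property of $F$ is used here.

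In the second case, $m=1$, we have $\deg h=\deg g-1\geq 1$, so $h$ genuinely factors as $q_1^{e_1}\cdots q_r^{e_r}$ with each $q_i$ irreducible in $F[x]$. Here the real closedness of $F$ enters: every irreducible polynomial in $F[x]$ has degree $1$ or $2$, hence $d=\min_i\deg q_i\in\{1,2\}$. Since $n\geq 2\geq d$, we are exactly in the hypothesis of Theorem~\ref{matrix}(ii)(a), which constructs a nonzero matrix $C'\in\mathrm{M}_n(F)$ (a companion block of the chosen $q$, padded by zeros) satisfying $f(C')=c\mathrm{I}_n=f(0)$. This again contradicts injectivity and completes the argument.

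There is essentially no obstacle to overcome: the bookkeeping of the decomposition $f=c+x^m h$ has already been set up in Theorem~\ref{matrix}, and the only field-theoretic input needed is the standard characterization of real closed fields as those whose irreducible polynomials have degree at most two, which is precisely what forces $d\leq 2\leq n$ and routes the $m=1$ case into part (ii)(a) rather than into the uncovered regime $n<d$ that motivated Problem~\ref{open1}.
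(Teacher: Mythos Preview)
Your proof is correct and follows exactly the route the paper intends: the corollary is deduced from Theorem~\ref{matrix} by observing that in a real closed field every irreducible polynomial has degree at most two, so $d\leq 2\leq n$ forces the $m=1$ case into part~(ii)(a). Your write-up is in fact more explicit than the paper's own justification, which is contained in the paragraph preceding the corollary.
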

	
	It is easy to see that the conclusion of Corollary~\ref{real closed} does not necessarily hold when \( n = 1 \). For instance, the polynomial \( f = x^3 \) serves as a concrete counterexample: the corresponding evaluation map \( \mathrm{eva}_{f,F} \) is injective, even though \( f \) has degree 3.

	Drawing upon Theorems~\ref{variable1} and~\ref{real closed}, we are led naturally to the following corollary.
	
	\begin{corollary}\label{ss}
		Let \( \mathcal{A} \) be an associative algebra over a field \( F \), and let \( f \in F[x] \) be a polynomial of degree greater than one. Fix a positive integer \( n \). Suppose \( K \) is either a real closed field (when \( n \geq 2 \)) or an algebraically closed field (when \( n \geq 1 \)) such that \( F \subseteq K \subseteq \mathcal{A} \). Additionally, if \( A \) is non-unital, assume that \( f \) has zero constant term. Then, the evaluation map \( \mathrm{eva}_{f,\mathrm{M}_n(\mathcal{A})} \) is not injective.
	\end{corollary}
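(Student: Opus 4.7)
The plan is to reduce the claim to the matrix algebra $\mathrm{M}_n(K)$, for which the desired non-injectivity has already been established. Since $F \subseteq K \subseteq \mathcal{A}$, the natural entrywise inclusion $\iota\colon \mathrm{M}_n(K) \hookrightarrow \mathrm{M}_n(\mathcal{A})$ is an injective $F$-algebra homomorphism: it preserves addition, matrix multiplication, and the $F$-action. Consequently, for any $A \in \mathrm{M}_n(K)$, the identity $\iota(f(A)) = f(\iota(A))$ holds, meaning that evaluating $f \in F[x]$ inside $\mathrm{M}_n(K)$ yields the same answer as evaluating it inside $\mathrm{M}_n(\mathcal{A})$. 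When $\mathcal{A}$ is non-unital, the assumption that $f$ has zero constant term is precisely what is needed to make both sides unambiguous, no identity matrix being required on either side.

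Given this identification, it suffices to exhibit two distinct matrices $A,B \in \mathrm{M}_n(K)$ with $f(A) = f(B)$, for then $\iota(A) \neq \iota(B)$ while $f(\iota(A)) = f(\iota(B))$ in $\mathrm{M}_n(\mathcal{A})$, defeating injectivity of $\mathrm{eva}_{f,\mathrm{M}_n(\mathcal{A})}$. Such a pair is produced directly by the results proved earlier. If $K$ is algebraically closed and $n=1$, Theorem~\ref{al1} yields two scalars in $K$ with the same image under $f$. If $K$ is algebraically closed and $n \geq 2$, then every irreducible polynomial over $K$ has degree $d = 1$, so the inequality $n \geq d$ in Theorem~\ref{matrix} is automatic and that theorem supplies the required pair. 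Finally, if $K$ is a real closed field and $n \geq 2$, Corollary~\ref{real closed} provides the conclusion at once.

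I do not foresee any substantial obstacle: the argument is essentially a functoriality check followed by a short case analysis that merely names the appropriate earlier result. The only delicate point is verifying that the embedding $\mathrm{M}_n(K) \hookrightarrow \mathrm{M}_n(\mathcal{A})$ is well behaved when $\mathcal{A}$ is non-unital, which it is because the matrices $A$ and $B$ produced have all entries in the subfield $K$, hence all powers $A^i$ and $B^i$ stay inside $\mathrm{M}_n(K)$ and the vanishing constant term of $f$ makes $f(A)$ and $f(B)$ unambiguous elements of $\mathrm{M}_n(\mathcal{A})$.
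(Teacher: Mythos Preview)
Your proposal is correct and follows essentially the same approach as the paper: both reduce the question to the non-injectivity of $\mathrm{eva}_{f,\mathrm{M}_n(K)}$ via the natural inclusion $\mathrm{M}_n(K)\hookrightarrow\mathrm{M}_n(\mathcal{A})$, and then invoke the earlier results. The only cosmetic difference is that the paper cites Theorem~\ref{variable1} (together with Corollary~\ref{real closed}) for the contradiction, whereas you split the algebraically closed case into $n=1$ (Theorem~\ref{al1}) and $n\ge 2$ (Theorem~\ref{matrix}); your version is slightly more explicit about the commutation $\iota(f(A))=f(\iota(A))$ and the role of the zero constant term in the non-unital case, but the substance is the same.
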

	
	\begin{proof}
		Injectivity of the map \( \mathrm{eva}_{f,\mathrm{M}_n(\mathcal{A})} \) would imply that \( \mathrm{eva}_{f,\mathrm{M}_n(K)} \) is also injective, which contradicts Theorem~\ref{variable1} and Corollary~\ref{real closed}. Therefore, the proof is complete.
	\end{proof}

	Returning to Theorem~\ref{matrix}, we note that when \( n < d \), difficulties arise due to the limitation on matrix size. However, this issue can be circumvented by working within the framework of the finitary matrix algebra. For clarity, we briefly recall its construction. Let \( F \) be a field. For each positive integer \( n \), there is a natural embedding of \( \mathrm{M}_n(F) \) into \( \mathrm{M}_{n+1}(F) \) that sends a matrix \( A = (a_{i,j}) \) to the matrix \( A' \), obtained by appending a row and a column of zeros, i.e., \( A' = A \oplus 0 \) where $0\in\mathrm{M}_1(F)$. This yields an ascending chain of matrix algebras:
	\[
	\mathrm{M}_1(F) \subseteq \mathrm{M}_2(F) \subseteq \cdots \subseteq \mathrm{M}_n(F) \subseteq \cdots.
	\]
	The union of this chain, denoted \( \mathrm{M}_\infty(F) \), is called the \emph{finitary matrix algebra} over \( F \):
	\[
	\mathrm{M}_\infty(F) = \bigcup_{n \geq 1} \mathrm{M}_n(F).
	\]
	Each element of \( \mathrm{M}_\infty(F) \) is a countably infinite matrix with only finitely many nonzero entries. In particular, every such matrix can be written in the form \( A \oplus 0 \) for some \( A \in \mathrm{M}_n(F) \), with \( n \geq 1 \), and where \( 0 \) denotes the infinite zero matrix. Without loss of generality, we may always assume \( n \geq 2 \). With this setup in place, we now establish the following result concerning finitary matrix algebras. 	Note that $\mathrm{M}_\infty(F)$ is non-unital, so we restrict our attention to polynomials whose constant term is zero. Note that this result follows immediately from Theorem~\ref{variable1} when the field \( F \) is algebraically closed, and more generally, it can be derived from Theorem~\ref{matrix}.

	\begin{corollary}\label{finitary1}
		Let \( F \) be a field, and let \( f \in F[x] \) be a polynomial of degree greater than one and with zero constant term. Then, the evaluation map \( \operatorname{eva}_{f, \mathrm{M}_\infty(F)} \) is not injective.
	\end{corollary}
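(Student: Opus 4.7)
The plan is to reduce the claim to Theorem~\ref{matrix} by observing that the obstruction appearing in part~(ii)(b) of that theorem---the constraint $n<d$---simply disappears inside $\mathrm{M}_\infty(F)$, because every finite $\mathrm{M}_n(F)$ embeds into $\mathrm{M}_\infty(F)$ via the non-unital algebra homomorphism $A\mapsto A\oplus 0$. Since $\mathrm{M}_\infty(F)$ is non-unital, the polynomial $f$ has zero constant term by assumption, so $c=f(0)=0$ and I would factor $f(x)=x^m h(x)$ with $m\ge 1$ maximal and $h(0)\neq 0$, exactly as in Theorem~\ref{matrix}. The goal is to exhibit a nonzero $X\in \mathrm{M}_\infty(F)$ with $f(X)=0=f(0)$, which would break injectivity.

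I would split on the value of $m$. If $m\ge 2$, I would take the $2\times 2$ nilpotent Jordan block $N\in\mathrm{M}_2(F)$ with $N^2=0$ and consider its image $\tilde N=N\oplus 0\in\mathrm{M}_\infty(F)$. Since the embedding respects multiplication and addition, and $f$ has no constant term, one computes $f(\tilde N)=f(N)\oplus 0=\bigl(N^m h(N)\bigr)\oplus 0=0$, while $\tilde N\neq 0$. If $m=1$, then $\deg h\ge 1$; I would factor $h(x)=q_1(x)^{e_1}\cdots q_r(x)^{e_r}$ into irreducibles in $F[x]$, pick one factor $q$ of minimum degree $d$, and let $C\in \mathrm{M}_d(F)$ be its companion matrix. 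Then $q(C)=0$, $C\neq 0$, and writing $h(x)=q(x)k(x)$ gives $h(C)=0$, whence $f(C)=C\,h(C)=0$. Transferring $C$ to $\tilde C=C\oplus 0\in\mathrm{M}_\infty(F)$ by the same embedding yields a nonzero element with $f(\tilde C)=0$, finishing the argument.

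The only subtlety worth flagging---and what I would call out explicitly---is that the embedding $\mathrm{M}_n(F)\hookrightarrow \mathrm{M}_\infty(F)$ is not unital, so one must work in the category of non-unital algebras; this is exactly the reason the hypothesis ``$f$ has zero constant term'' is imposed and is what makes the computation $f(A\oplus 0)=f(A)\oplus 0$ legitimate. There is no real obstacle beyond this bookkeeping: the essential point, in contrast to Theorem~\ref{matrix}, is that $\mathrm{M}_d(F)\subseteq \mathrm{M}_\infty(F)$ for every $d\ge 1$, so the companion matrix of the minimum-degree irreducible factor is always available, and non-injectivity follows uniformly across all base fields $F$.
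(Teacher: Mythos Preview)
Your proposal is correct and follows essentially the same approach as the paper: both exploit the fact that $\mathrm{M}_n(F)$ embeds in $\mathrm{M}_\infty(F)$ for all $n$, so the size constraint $n<d$ from Theorem~\ref{matrix}(ii)(b) no longer obstructs. The only cosmetic difference is packaging: the paper argues by contradiction---if $\operatorname{eva}_{f,\mathrm{M}_\infty(F)}$ were injective, its restriction to $\mathrm{M}_n(F)$ would be injective for $n$ chosen large enough to invoke Theorem~\ref{matrix}---whereas you re-run the explicit constructions (nilpotent block, companion matrix) from the proof of Theorem~\ref{matrix} and push them forward along the embedding. Your remark that the non-unital embedding is compatible with evaluation precisely because $f$ has zero constant term is exactly the point that makes the restriction argument in the paper go through as well.
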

	
	\begin{proof}
		Take any matrix \( A \in \mathrm{M}_\infty(F) \). Since \( A \) has only finitely many nonzero entries, we may write \( A = A' \oplus 0 \) for some \( A' \in \mathrm{M}_n(F) \), where \( n \geq 1 \). If the map \( \operatorname{eva}_{f, \mathrm{M}_\infty(F)} \) were injective, then its restriction to \( \mathrm{M}_n(F) \) would also be injective. However, this contradicts Theorem~\ref{matrix}, as \( n \) can be chosen large enough to ensure that it exceeds the value of \( d \) mentioned in Theorem~\ref{matrix}. Therefore, \( \operatorname{eva}_{f, \mathrm{M}_\infty(F)} \) cannot be injective.
	\end{proof}
	
	In what follows, we continue our exploration of injective maps between algebras, particularly those that are isomorphic to matrix algebras. 	Before proceeding, we require the following technical lemma. While it may seem straightforward, we include it here for clarity and to highlight its specific role in the proof.
	
	\begin{lemma}\label{lem}
		Let \( \mathcal{A} \) and \( \mathcal{B} \) be unital associative algebras over a field \( F \), and suppose they are isomorphic as \( F \)-algebras via an algebra homomorphism \( \varphi: \mathcal{A} \to \mathcal{B} \). Let \( f \in F[X] \) be a polynomial.   Then,  \( \mathrm{eva}_{f,\mathcal{A}} \) is injective if and only if \( \mathrm{eva}_{f,\mathcal{B}} \) is injective.
	\end{lemma}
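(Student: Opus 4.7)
The plan is to exploit the fact that any \(F\)-algebra homomorphism commutes with polynomial evaluation, so the isomorphism \(\varphi\) intertwines the two evaluation maps. Once this is set up, injectivity transfers in both directions because \(\varphi\) is a bijection.

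First I would extend \(\varphi\) componentwise to a map \(\Phi := \varphi \times \cdots \times \varphi : \mathcal{A}^m \to \mathcal{B}^m\) (where \(m\) is the number of variables of \(f\); the single-variable case corresponds to \(m=1\)). Since \(\varphi\) is a bijection, so is \(\Phi\). The key algebraic fact, proved by induction on the degree of \(f\) using that \(\varphi\) is \(F\)-linear, multiplicative, and sends \(\mathbf{1}_{\mathcal{A}}\) to \(\mathbf{1}_{\mathcal{B}}\), is the identity
\[
\varphi\bigl(f(a_1,\ldots,a_m)\bigr) \;=\; f\bigl(\varphi(a_1),\ldots,\varphi(a_m)\bigr)
\qquad \text{for all } (a_1,\ldots,a_m) \in \mathcal{A}^m.
\]
In other words, the diagram
\[
\varphi \circ \mathrm{eva}_{f,\mathcal{A}} \;=\; \mathrm{eva}_{f,\mathcal{B}} \circ \Phi
\]
commutes.

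Given this intertwining relation, the rest is formal. Suppose \(\mathrm{eva}_{f,\mathcal{A}}\) is injective and take \(b, b' \in \mathcal{B}^m\) with \(\mathrm{eva}_{f,\mathcal{B}}(b) = \mathrm{eva}_{f,\mathcal{B}}(b')\). Writing \(b = \Phi(a)\) and \(b' = \Phi(a')\) via the bijection \(\Phi\), the commutative diagram yields \(\varphi(\mathrm{eva}_{f,\mathcal{A}}(a)) = \varphi(\mathrm{eva}_{f,\mathcal{A}}(a'))\); since \(\varphi\) is injective, this forces \(\mathrm{eva}_{f,\mathcal{A}}(a) = \mathrm{eva}_{f,\mathcal{A}}(a')\), hence \(a = a'\) and thus \(b = b'\). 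The reverse implication is symmetric, applying the same argument to the inverse isomorphism \(\varphi^{-1}: \mathcal{B} \to \mathcal{A}\).

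The argument is essentially bookkeeping, so there is no real obstacle; the only mild subtlety worth stating carefully is that \(\varphi\) being an \emph{\(F\)-algebra} homomorphism (in particular \(F\)-linear and unit-preserving) is exactly what is needed for the intertwining identity to hold on constant and scalar-multiple terms of \(f\). I would therefore organize the proof as a short paragraph establishing the intertwining identity, followed by a one-line diagram-chase deducing equivalence of injectivity.
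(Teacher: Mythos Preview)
Your argument is correct and is exactly the natural one: the intertwining identity \(\varphi \circ \mathrm{eva}_{f,\mathcal{A}} = \mathrm{eva}_{f,\mathcal{B}} \circ \Phi\) holds because \(\varphi\) is an \(F\)-linear, multiplicative, unit-preserving bijection, and injectivity then transfers formally in both directions. The paper itself does not supply a proof of this lemma---it is stated and then immediately used, the authors having remarked just beforehand that it ``may seem straightforward''---so there is nothing to compare against beyond noting that your write-up fills in precisely the routine verification the paper leaves to the reader.
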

	
	In light of Lemma~\ref{lem}, our results at this stage may apply to any algebra \( \mathcal{A} \) that is isomorphic to \( \mathrm{M}_n(\mathcal{A}) \) for some integer \( n \geq 2 \) (or weaker forms such as direct products of matrix algebras). This condition defines a remarkably broad class of algebras. For example, the Wedderburn--Artin theorem tells us that any finite-dimensional semisimple algebra over a field is isomorphic to a finite direct product of matrix algebras over division rings. Additional examples arise from functional analysis: consider the algebra \( B(X) \) of all bounded linear operators on a Banach space \( X \). If \( X \cong X \oplus X \), then \( B(X) \cong \mathrm{M}_2(B(X)) \). In particular, infinite-dimensional Hilbert spaces \( H \) satisfy \( H \cong H \oplus H \), so this property holds for most Banach spaces. In fact, the first known infinite-dimensional Banach space not isomorphic to its square was only constructed in 1960~\cite{Pa_BP_60}. A similar phenomenon occurs in algebra: if \( X \) is a free module of infinite rank over a ring \( R \), then its endomorphism ring \( \mathrm{End}_R(X) \) is isomorphic to \( \mathrm{M}_2(\mathrm{End}_R(X)) \), since \( X \cong X \oplus X \) in this setting as well.
	
	We conclude this section with a result concerning functions of several variables that are naturally associated with polynomials in one variable over a field \( F \). The motivation arises from observing that \( \mathrm{M}_n(F) \), the algebra of \( n \times n \) matrices over \( F \), is isomorphic to the \( F \)-vector space \( F^{n^2} \) via the linear map
	\[
	A = (a_{i,j}) \mapsto \varphi(A) = (a_{1,1}, a_{1,2}, \ldots, a_{1,n}, a_{2,1}, \ldots, a_{2,n}, \ldots, a_{n,1}, \ldots, a_{n,n}).
	\]
	However, it is important to emphasize that this map is not a ring homomorphism, since a ring homomorphism from \( \mathrm{M}_n(F) \) to $F$ would send all nilpotent matrices to zero, which is clearly not the case here. Thus, the assumption that \( \varphi \) is an algebra homomorphism in Lemma~\ref{lem} plays a crucial role.

	Now let \( f \in F[x] \) be a polynomial, and let \( n\geq2 \) be an integer.  We proceed by recalling the earlier remark that \( \mathrm{M}_n(F) \cong F^{n^2} \) as vector spaces. Define a map
	\[
	\overline{f} : F^{n^2} \xrightarrow{\theta} \mathrm{M}_n(F) \xrightarrow{\mathrm{eva}_{f, \mathrm{M}_n(F)}} \mathrm{M}_n(F) \xrightarrow{\varphi} F^{n^2},
	\]
	where the map \( \theta \) is defined by arranging the entries of \( F^{n^2} \) into an \( n \times n \) matrix: given \( (a_1, \ldots, a_{n^2}) \in F^{n^2} \), we set \( \theta(a_1, \ldots, a_{n^2}) = (b_{i,j}) \), where \( b_{i,j} = a_{(i-1)n + j} \). It is straightforward to check that \( \theta \) is bijective and in fact the inverse of \( \varphi \), i.e., \( \theta = \varphi^{-1} \).
	
	It follows that if \( \overline{f} \) is injective, then so is the evaluation map \( \mathrm{eva}_{f, \mathrm{M}_n(F)} \), due to the bijectivity of both \( \theta \) and \( \varphi \). Now, invoking Theorem~\ref{matrix}, we know that if \( n \) is sufficiently large, then the injectivity of \( \mathrm{eva}_{f, \mathrm{M}_n(F)} \) implies that \( f \) must be linear. Similarly, even when \( n \) is not large, the same conclusion holds under the assumption that \( F \) is algebraically closed or real closed, as established in Proposition~\ref{al1} and Corollary~\ref{real closed}.
	
	This discussion leads us to the following theorem.
	
	\begin{theorem}\label{Ax}
		If the associated map \( \overline{f} \), as defined above, is injective, then \( f \) must have degree one in each of the following cases:
		\begin{enumerate}[\rm (i)]
			\item \( n \) is sufficiently large,
			\item \( F \) is algebraically closed,
			\item \( F \) is real closed.
		\end{enumerate}
	\end{theorem}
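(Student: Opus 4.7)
The plan is to transfer the injectivity hypothesis on $\overline{f}$ to injectivity of the matrix evaluation map $\mathrm{eva}_{f,\mathrm{M}_n(F)}$, and then appeal in each of the three cases to a classification already established in the paper. The first reduction is immediate from the definition: one has $\overline{f} = \varphi \circ \mathrm{eva}_{f,\mathrm{M}_n(F)} \circ \theta$, where $\theta$ and $\varphi$ are mutually inverse bijections of sets between $F^{n^2}$ and $\mathrm{M}_n(F)$. Since precomposition and postcomposition with bijections preserve (and reflect) injectivity, $\overline{f}$ is injective if and only if $\mathrm{eva}_{f,\mathrm{M}_n(F)}$ is injective. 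Note that only set-theoretic bijectivity is needed here, so the fact emphasized just before the theorem that $\varphi$ is not an algebra homomorphism is not an obstacle; Lemma~\ref{lem} is not directly applicable but is also not needed.

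With that reduction in hand, I would argue the three cases as contrapositives. For case (ii), if $F$ is algebraically closed, restrict $\mathrm{eva}_{f,\mathrm{M}_n(F)}$ to the scalar matrices $\{\lambda\mathrm{I}_n : \lambda \in F\}$ — a copy of $F$ in $\mathrm{M}_n(F)$ — to obtain that $\mathrm{eva}_{f,F}$ is itself injective, and then invoke Theorem~\ref{al1} to force $\deg f = 1$. (Alternatively, Theorem~\ref{variable1}(i) applies directly to the algebra $\mathcal{A} = \mathrm{M}_n(F)$.) For case (iii), with $F$ real closed and $n \geq 2$, Corollary~\ref{real closed} is exactly the contrapositive required: $\deg f > 1$ implies $\mathrm{eva}_{f,\mathrm{M}_n(F)}$ is not injective, so injectivity forces $\deg f \leq 1$, and Remark~\ref{remark}(i) rules out constants.

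For case (i), I interpret ``$n$ sufficiently large'' as $n \geq d$, where $d$ is the minimum degree of an irreducible factor of $h(x)$ in the decomposition $f(x) - f(0) = x^m h(x)$ from Theorem~\ref{matrix}. Supposing $\deg f > 1$, either $m \geq 2$, in which case Theorem~\ref{matrix}(i) produces a nonzero nilpotent $N$ with $f(N) = f(0)\mathrm{I}_n$ for any $n \geq 2$, or $m = 1$ and $n \geq d$, in which case Theorem~\ref{matrix}(ii)(a) constructs a nonzero companion-matrix witness showing non-injectivity. Taking the contrapositive, injectivity of $\mathrm{eva}_{f,\mathrm{M}_n(F)}$ for such $n$ yields $\deg f \leq 1$, hence $\deg f = 1$ by nonconstancy.

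There is essentially no hard step in this proof: the substance has been packaged in Theorem~\ref{matrix}, Theorem~\ref{al1}, Theorem~\ref{variable1}, and Corollary~\ref{real closed}, and what remains is a clean bookkeeping argument. The only point that deserves a line of emphasis — and the closest thing to an obstacle — is that $\varphi$ and $\theta$ are only bijections of underlying sets, not algebra isomorphisms, so one must explicitly observe that set-theoretic bijectivity of the outer factors of a composition already suffices to transfer injectivity of the middle factor.
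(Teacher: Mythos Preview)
Your proposal is correct and follows essentially the same route as the paper: the reduction from injectivity of $\overline{f}$ to injectivity of $\mathrm{eva}_{f,\mathrm{M}_n(F)}$ via the bijections $\theta$ and $\varphi$, followed by an appeal to Theorem~\ref{matrix} for case~(i), to Theorem~\ref{al1} (through scalar matrices) for case~(ii), and to Corollary~\ref{real closed} for case~(iii). Your version adds useful detail---the explicit threshold $n\geq d$ for ``sufficiently large'' and the remark that only set-theoretic bijectivity of $\theta,\varphi$ is needed---but the argument is the same.
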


	As is well known, every complex polynomial is in fact an entire function on the complex plane. 
	This naturally leads us to ask what can be said about the injectivity of entire functions. 
	Recall that in complex analysis, an \textit{entire function} in one variable is a complex-valued function 
	that is holomorphic on all of $\mathbb{C}$. 
	Moreover, any entire function $f:\mathbb{C}\to\mathbb{C}$ admits a power series expansion $\displaystyle f(z) = \sum_{i=0}^{\infty} a_i z^i,$
	which converges everywhere on $\mathbb{C}$. 
	
	It is a classical fact that every injective entire function must necessarily be  of the form $f(z) = az + b$ with $a \neq 0$ and $a, b \in \mathbb{C}$; 
	see \cite[Chapter 10, Section 2, Theorem, p.~311]{Bo_Re_91} for details. 
	Since the radius of convergence of $f$ is infinite, its defining power series  $\displaystyle  f(z) = \sum_{i=0}^\infty a_i z^i$
	remains valid when substituting a matrix $A$ over $\mathbb{C}$, leading to the natural definition  
	$\displaystyle  f(A) = \sum_{i=0}^\infty a_i A^i,$
	as discussed in \cite[Theorem 1]{Bo_FrSi_08}. 
	Following a similar line of reasoning, one can extend this framework to the setting of unital associative Banach algebras. 
	More precisely, if $\mathcal{A}$ is such an algebra and $f:\mathbb{C}\to\mathbb{C}$ is entire, 
	then the series $\displaystyle \sum_{i=0}^\infty a_i \alpha^i$ converges for every element $\alpha \in \mathcal{A}$. 
	Indeed, the root test shows absolute convergence, and completeness of $\mathcal{A}$ ensures that the limit lies in $\mathcal{A}$.  
	
	With this in hand, we arrive at the following conclusion: if $\mathcal{A}$ is a unital associative Banach algebra over $\mathbb{C}$ 
	and the evaluation map $\operatorname{eva}_{f,\mathcal{A}}$ associated with an entire function $f$ is injective, 
	then necessarily $f$ must be affine linear, i.e., of the form $f(z) = az + b$ with $a \neq 0$.  
	
	In parallel with the complex field \( \mathbb{C} \), attention has also been given to the study of entire functions over \( \mathbb{C}_p \), the field of \( p \)-adic complex numbers. Both \( \mathbb{C} \) and \( \mathbb{C}_p \) are complete, algebraically closed fields of characteristic zero, each endowed with a nontrivial absolute value which is archimedean in the case of \( \mathbb{C} \), and nonarchimedean in the case of \( \mathbb{C}_p \). According to the classical result of Ostrowski \cite[Theorem 5.5]{Bo_Gri_07}, up to isomorphism preserving absolute values, \( \mathbb{R} \) and \( \mathbb{C} \) are the only complete fields with respect to an archimedean absolute value. Consequently, \( \mathbb{C} \) is, up to isomorphism, the only complete algebraically closed field with a nontrivial archimedean absolute value. In contrast, the landscape in the nonarchimedean case is significantly richer and more intricate; for a detailed discussion, see \cite[Section 9]{Pa_Ba_18}. Hence, we anticipate that this work may also be of interest to researchers in nonarchimedean analysis, as we believe that our results concerning entire functions continue to hold in the nonarchimedean setting.

	\section*{Acknowledgments}
	
	This research is funded by Vietnam National Foundation for Science and Technology Development (NAFOSTED) under grant number IZVSZ2\underline{~}229554.

	\section*{Declarations}
	
	Our statements here are the following:
	
	\begin{itemize}
		\item {\bf Ethical Declarations and Approval:} The authors have no any competing interest to declare that are relevant to the content of this article.
		\item {\bf Competing Interests:} The authors declare no any conflict of interest.
		\item  {\bf Authors' Contributions:} All three listed authors worked and contributed to the paper equally. The final editing was done by the corresponding author Frank Kutzschebauch and was approved by all of the present authors.
		\item {\bf Availability of Data and Materials:} Data sharing not applicable to this article as no data-sets or any other materials were generated or analyzed during the current study.
	\end{itemize}

	%\section*{Acknowledgement}
	
	%The authors are very grateful to the specialist referee for the given numerous expert comments and suggestions that helped to improve the final version of the current article.

	\bibliographystyle{amsplain}

\end{document}